\newcommand{\labbel}{\label}
\newtheorem{theorem}{Theorem}[section]
\newtheorem{lemma}[theorem]{Lemma}
\newtheorem{proposition}[theorem]{Proposition} 
\newtheorem{corollary}[theorem]{Corollary}
\newtheorem*{theorem*}{Theorem}
\newtheorem*{corollary*}{Corollary}
\newtheorem*{proposition*}{Proposition}
\theoremstyle{definition}
\newtheorem{definition}[theorem]{Definition}
\theoremstyle{remark}
\newtheorem{remark}[theorem]{Remark}
\newtheorem*{remark*}{Remark}
\newtheorem*{definition*}{Definition}
\newtheorem*{disclaimer*}{Disclaimer}
\newtheorem{example}[theorem]{Example}
\newcommand{\+}{\mathbin{\hash}}
\DeclareMathOperator*{\pl}{\mathbin{\hash}}
\DeclareMathOperator*{\nsum}{\ifdim\displaywidth>0pt {{\mbox{\Large \#}}}\else{{\mbox{\large \#}}}\fi}
\begin{document}
 
\title{An Infinite Natural Sum}

\author{Paolo Lipparini} 
\address{Dipartimento Diaccio di Matematica\\Viale della Ricerca Scientifica\\II Universit\`a di Roma (Tor Vergata)\\I 00133 Rome \\ Italy}
\urladdr{http://www.mat.uniroma2.it/\textasciitilde lipparin}
\email{lipparin@axp.mat.uniroma2.it}

\keywords{Ordinal number, sum, infinite natural sum, left-finite, piecewise convex,
infinite mixed sum} 

\subjclass
[2010]
{Primary 03E10; Secondary 06A05}
\thanks{Work performed under the auspices of G.N.S.A.G.A}

\begin{abstract}
As  far as
algebraic properties are concerned,
 the usual addition on the class of ordinal numbers 
is not really well behaved; for example,
it is not commutative, nor left cancellative etc.
In a few cases,
the \emph{natural 
Hessenberg 
sum} is a better alternative,
since it shares most of the usual properties
of the addition on the naturals.

A countably infinite iteration of the natural sum 
has
been  used in a recent paper by V\"a\"an\"anen and Wang,
with applications to infinitary logics.
We present a detailed study of this infinitary operation,
showing that there are many similarities with
the ordinary infinitary sum, and providing connections with 
certain kinds of   infinite mixed sums.
\end{abstract} 
 
\maketitle

\section{Introduction} \labbel{intro}

There are different ways to extend the addition operation
from the set $ \omega$ of natural numbers
to the class of ordinals.
The standard way is to take
$\alpha+ \beta $ as the ordinal which represents the 
 order type of 
$\alpha$ with a copy of $\beta$ added at the top.
This operation can be introduced by the customary 
inductive definition and satisfies only few of the familiar
properties shared by the addition on the naturals.

On the other hand, again on the class of the ordinals, one can define the
 \emph{(Hessenberg) natural sum} $\alpha\+ \beta $ 
of $\alpha$ and $\beta$ by expressing $\alpha$ and $\beta$
in Cantor normal form and ``summing linearly''. 
See below for further details.
The resulting operation $\+$ is commutative, associative and cancellative.
It can be given an inductive definition as follows.
\begin{equation}\labbel{defnat}
\begin{split}   
&0\+0=0\\
&\alpha \+ \beta =
 \sup _{ 
\substack{\alpha' < \alpha\\ \beta' < \beta}
} 
\{ S(\alpha \+ \beta'), S(\alpha' \+ \beta) \}  
\end{split}  
\end{equation}   
where $S$ denotes \emph{successor}.

It is relevant that the natural sum, too, admits an order theoretical definition.
If $\alpha$, $\beta$ and $\gamma$ are ordinals,
$\gamma$ is said to be a \emph{mixed sum}
of $\alpha$ and $\beta$ if there are disjoint subsets
$A$ and $B$ of $\gamma$ such that 
$\gamma= A \cup B$ and $A$, $B$ have order type,
respectively, $\alpha$ and $\beta$, under the order induced
by  $\gamma$.
P. W. Carruth \cite{Car} showed that
$\alpha \+ \beta $ is the largest mixed sum
of $\alpha$ and $\beta$. He also found many applications. 

In V\"a\"an\"anen and Wang \cite{VW} the authors define
 a countably infinite
extension of $\+$ by taking supremum at the limit stage.
They provide applications  to infinitary logics.
Subsequently,
we have found applications to
compactness of topological spaces
in the spirit of \cite{OC},
in particular, with respect to Frol\'\i k  sums.

Carruth Theorem, as it stands, cannot be generalized to such 
an infinite natural sum.
Indeed, 
every countably infinite ordinal
is an ``infinite mixed sum'' of countably many 
$1$'s, hence in the infinite case the maximum is not 
necessarily attained.
 See Definition \ref{infmix} and the comment 
after Theorem \ref{carr}.

However, we show that
Carruth Theorem can  indeed be generalized, provided
we restrict ourselves to certain well behaved
infinite mixed sums.
In order to provide this generalization,
we need a finer description of the
countably infinite natural sum.
We show that any infinite natural sum
can be computed in two steps: in
the first step one takes the  natural sum
of some sufficiently large \emph{finite} set
of summands.  In the second step one  adds
the infinite ordinary sum of the remaining summands.
In other words, the infinite natural sum and the 
more usual infinite sum differ only for a finite ``head''
and they agree on the remaining ``tail''. This is
used in order to show that the infinite natural sum of a
sequence is the maximum of all possible infinite mixed sums
made of elements from the sequence,
provided one restricts only to mixed sums 
satisfying an appropriate finiteness condition.

In the end, we show that the infinite natural sum can be actually
computed as some finite natural sum, and can be expressed
in terms of the Cantor normal forms of the summands.
We show that a sequence has only a finite number of
mixed sums satisfying an additional convexity property.
This extends a classical theorem by Sierpinski \cite{Sie},
asserting that one gets only a finite 
number of values for the sum of 
some fixed countable sequence of ordinals, by changing their order.

\section{Natural sums} \labbel{natsumsec} 

We now give more details 
about the definitions hinted above 
and list some simple facts about the natural sums.
Here and below sums, products and exponentiations will be always intended
in the ordinal sense.
See, e.~g., the books Bachmann \cite{Bac} and  
 Sierpinski \cite{Sier} for a detailed introduction
to ordinal operations.
Recall that every ordinal $ \alpha  >0$ can be expressed in
a unique way in \emph{Cantor normal form} as follows 
\begin{equation} \labbel{cantor}   
 \alpha  =
 \omega ^ {\xi_k} r_k + \omega ^ {\xi _{k-1}}r _{k-1}    +
\dots
+ \omega ^ {\xi_1} r_1 + \omega ^ {\xi_0}r_0  
 \end{equation}
  for 
 integers 
$k \geq 0$, $r_k, \dots, r_0 >0$ and ordinals
$ \xi _k > \xi _{k-1} > \dots > \xi_1 > \xi_0  $.

\begin{definition} \labbel{natsum}
The natural sum $\alpha \+ \beta $ of two ordinals 
$\alpha$ and $\beta$ 
is the only operation
satisfying
\begin{equation*}    
 \alpha \+ \beta  =
 \omega ^ {\xi_k} (r_k + s_k) +
\dots
+ \omega ^ {\xi_1} (r_1+s_1) + \omega ^ {\xi_0}(r_0 + s_0) 
 \end{equation*}
whenever
\begin{equation*}
\begin{split}   
 \alpha   =
 \omega ^ {\xi_k} r_k  +
\dots
+ \omega ^ {\xi_1} r_1 + \omega ^ {\xi_0}r_0\\  
 \beta  =
 \omega ^ {\xi_k}  s_k +
\dots
+ \omega ^ {\xi_1} s_1 + \omega ^ {\xi_0} s_0 
 \end{split} 
\end{equation*}
and $k , r_k, \dots, r_0, s_k, \dots, s_0  < \omega$, 
$ \xi _k > \dots > \xi_1 > \xi_0  $.
 \end{definition}   

The definition is justified by the fact that we can represent every
nonzero $\alpha$ and $\beta$ in Cantor normal form and then insert
some more null coefficients for convenience
just in order to make the indices match.
The null coefficients do not affect the  ordinals,
hence the definition is well-posed.
See, e.~g.,  \cite{Bac,Sier} for further details.
 
An elegant way to introduce the natural sum
is obtained by expressing equation
\eqref{cantor} 
 in a conventional way as     
$\alpha = \sum _{ \xi \in F} \omega ^ \xi ,$
where $F$ is the finite \emph{multiset} which contains each
$\xi_ \ell$ exactly $r _ \ell$ times.
This is justified by the fact that, say,
$\omega ^ \xi + \omega ^ \xi = \omega ^ \xi 2$.
In this way,  $\alpha= 0 $ is expressed by summing over the
empty multiset. 
Of course, when expanding the above summation,
one should be careful to consider the terms with larger
exponents first, that is, write them on the left.
If $ \alpha = \sum _{ \xi \in F} \omega ^ \xi $ and
$ \beta  = \sum _{ \xi \in T} \omega ^ \xi $,
then
$ \alpha  \+ \beta $ is defined as $ \sum _{ \xi \in F \cup T} \omega ^ \xi $,
where in the  union $F \cup T$ we take into account multiplicities.
In this note, however, we shall follow the more conventional
notations.

It can be shown 
by induction on $(\max \{ \alpha, \beta  \}, \min \{ \alpha, \beta  \} )$,
ordered lexicographically, 
that Definition \ref{natsum} 
is equivalent to the definition given by means of
equations \eqref{defnat}. This shall not be needed in what follows.
 
Notice that the assumption
$ \xi _k > \dots > \xi_1 > \xi_0  $ in Definition \ref{natsum} 
is necessary, since, for example,
$(1 + \omega) \+ (1 + \omega 0)$ is $ \omega \+ 1 = \omega + 1$,
while summing ``linearly'' we would obtain
$2+ \omega = \omega $.
However, the assumption that
$ \xi _k > \dots > \xi_1 > \xi_0  $
can be relaxed to
$ \xi _k \geq \dots \geq\xi_1 \geq \xi_0  $.

\begin{proposition} \labbel{facts}
Let $\alpha$, $\beta$ and $\eta$ be ordinals. 
  \begin{enumerate}[(1)]    
\item
The operation $\+ $ is commutative, associative, both left and right cancellative
and strictly monotone in both arguments.
\item
$ \sup \{ \alpha, \beta \} \leq  \alpha + \beta \leq \alpha \+ \beta $.
\item
If $\alpha, \beta < \omega ^ \eta $,
then  $\alpha \+ \beta < \omega ^ \eta $.
\item
If $\beta < \omega ^ \eta $,
then  $\alpha \+ \beta < \alpha  + \omega ^ \eta $.
\item
If $\beta < \omega ^ \eta $,
then
 $(\alpha \+ \beta) + \omega ^ \eta = \alpha  + \omega ^ \eta $.
   \end{enumerate}
 \end{proposition}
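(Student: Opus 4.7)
The plan is to derive all five items directly from the Cantor normal form description in Definition~\ref{natsum}, relying on the multiset reformulation sketched after it to keep the bookkeeping transparent.

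For~(1), after aligning the Cantor normal forms of the two arguments over a common list of exponents $\xi_k>\dots>\xi_0$ (padding with zero coefficients as needed), the natural sum is completely determined by the componentwise sums $r_i+s_i$ on $\omega$. Commutativity, associativity and two-sided cancellativity are then inherited coefficient by coefficient from $\omega$, and strict monotonicity in either argument is read off by inspecting the largest exponent at which the Cantor normal forms disagree. In~(2), the inequality $\sup\{\alpha,\beta\}\le\alpha+\beta$ is standard for ordinal addition, and $\alpha+\beta\le\alpha\+\beta$ follows from Carruth Theorem, since $\alpha+\beta$ is one particular mixed sum of $\alpha$ and $\beta$ while $\alpha\+\beta$ is the maximum. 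Part~(3) is immediate from the multiset description: the hypothesis says that every exponent appearing in the Cantor normal forms of $\alpha$ and of $\beta$ is strictly less than $\eta$, hence the same is true of their union multiset, so $\alpha\+\beta<\omega^\eta$.

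The substance lies in~(4), from which~(5) will follow easily. I would decompose $\alpha$ at the threshold~$\eta$: write $\alpha=\alpha_1+\alpha_2$, where $\alpha_1$ collects the Cantor normal form terms of $\alpha$ with exponent $\ge\eta$ and $\alpha_2$ those with exponent $<\eta$, so that $\alpha_2<\omega^\eta$. On one hand, $\alpha_2$ is absorbed on the right, giving $\alpha+\omega^\eta=\alpha_1+\omega^\eta$. On the other hand, by~(3) we have $\alpha_2\+\beta<\omega^\eta$, and every exponent appearing in the Cantor normal form of $\alpha_1$ is $\ge\eta$, hence disjoint from and strictly larger than every exponent appearing in that of $\alpha_2\+\beta$. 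For such non-overlapping arguments the multiset description of $\+$ reduces to ordinary concatenation, so
\[
\alpha\+\beta=\alpha_1\+(\alpha_2\+\beta)=\alpha_1+(\alpha_2\+\beta)<\alpha_1+\omega^\eta=\alpha+\omega^\eta.
\]
For~(5), combining~(4) with $\alpha\le\alpha\+\beta$ (which follows from~(1) and~(2)) we may write $\alpha\+\beta=\alpha+\gamma$ for some $\gamma<\omega^\eta$, and then $(\alpha\+\beta)+\omega^\eta=\alpha+\gamma+\omega^\eta=\alpha+\omega^\eta$ since $\gamma$ is absorbed on the right.

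The one delicate point, which I would flag as the main thing to verify carefully, is the identity $\alpha_1\+(\alpha_2\+\beta)=\alpha_1+(\alpha_2\+\beta)$ used in~(4): namely, that the natural sum coincides with the ordinary sum whenever every exponent appearing in the left summand's Cantor normal form strictly dominates every exponent appearing in the right summand's. Once this is read off from Definition~\ref{natsum}, the remainder of the argument is essentially bookkeeping.
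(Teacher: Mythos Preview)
Your proof is correct and follows essentially the same Cantor-normal-form approach as the paper, which simply says everything is ``almost immediate from Definition~\ref{natsum}'' and illustrates only part~(4) by comparing the coefficient of $\omega^\eta$ in $\alpha+\omega^\eta$ with the corresponding coefficients in $\alpha\+\beta$.

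Two small differences are worth flagging. First, for~(2) you appeal to Carruth's Theorem to get $\alpha+\beta\le\alpha\+\beta$; the paper instead reads this inequality directly off the normal forms (ordinary addition absorbs the low-exponent tail of $\alpha$, so each coefficient of $\alpha+\beta$ is at most the corresponding coefficient of $\alpha\+\beta$). Your route is not circular here, since Carruth's result is cited externally rather than proved in the paper, but it is a detour relative to the one-line normal-form check the paper intends. Second, your treatment of~(4) via the decomposition $\alpha=\alpha_1+\alpha_2$ and your derivation of~(5) from~(4) by writing $\alpha\+\beta=\alpha+\gamma$ with $\gamma<\omega^\eta$ are slightly more structured than the paper's direct inspection of normal forms; both are entirely fine and arguably make the dependencies clearer. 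The ``delicate point'' you flag --- that $\+$ agrees with $+$ when the exponent sets are separated --- is indeed exactly what Definition~\ref{natsum} says.
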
 

 \begin{proof}
Everything is almost immediate from Definition \ref{natsum}.

For example, to prove (4), let 
$ \alpha   =
 \omega ^ {\xi_k} r_k  +
\dots
+ \omega ^ { \eta } r + \dots $
 with, as usual, the exponents of $ \omega$ 
in decreasing order, and where we can allow
$r$ to be $0$.
Then
$ \alpha + \omega ^ \eta   =
 \omega ^ {\xi_k} r_k  +
\dots
+ \omega ^ { \eta } (r+1)$,
 while, if
$\beta < \omega ^ { \eta }$, then
$ \alpha \+ \beta    =
 \omega ^ {\xi_k} r_k  +
\dots
+ \omega ^ { \eta } r + \dots$,
since $\beta$ does not contribute to 
summands where the exponent of $ \omega$
is $\geq \eta $.  
Thus surely 
$ \alpha + \omega ^ \eta >
 \alpha \+ \beta  $,
with no need to compute
explicitly those summands which are
$<\omega ^ \eta$. 
 \end{proof} 

Parentheses are usually necessary in expressions involving
\emph{both} $+$ and $\+$; for example,
$( 1 \+ 0 ) + \omega = \omega \not= \omega + 1 = 1 \+ (0 + \omega )$,
or 
$(1+0) \+ \omega = \omega +1 \not= \omega = 1 + (0 \+ \omega )$.

\begin{definition} \labbel{natsumw}
Suppose that $( \alpha_i) _{i < \omega } $ 
is a countable sequence of ordinals, and set
$S_n= \alpha _0 \+ \dots \+ \alpha  _{n-1} $, for every 
$ n < \omega$. The 
\emph{natural sum} of $( \alpha_i) _{i < \omega } $ is
\begin{equation*} 
\nsum _{i < \omega } \alpha _i = \sup _{n < \omega } S_n 
 \end{equation*}    
The above natural sum is denoted
by $\sum^{\#} _{i < \omega } \alpha _i$ 
in \cite{VW}. 
 \end{definition}   

In the above notation $ \alpha _0 \+ \dots \+ \alpha  _{n-1} $
we conventionally allow $n=0$, and assume that 
$0$ is the outcome of such an ``empty'' sum. 
Notice that the notation is not ambiguous, in view of
Proposition \ref{facts}(1). 

\begin{proposition} \labbel{factsw}
Let $\alpha_i$, $\beta_i$ be ordinals and $ n,m < \omega$.  
  \begin{enumerate}[(1)]    
\item
$\sum_{i < \omega  } \alpha _i
\leq
\nsum _{i < \omega   } \alpha _i$
\item
If $\beta_i \leq \alpha _i$, for every $ i < \omega$,
then
$\nsum _{i < \omega   } \beta  _i \leq \nsum _{i < \omega   } \alpha _i$    
\item
If $n < m$, then
$S_n \leq S_m$; equality holds if and only if 
$\alpha_n= \dots= \alpha _{m-1} =0$.   
\item
$S_n  \leq \nsum _{i < \omega } \alpha _i$; equality holds
if and only if $\alpha_i=0$, for every $i \geq n$.  
\item
If  $\pi$ is a permutation of $ \omega$, then
$\nsum _{i < \omega } \alpha _i = \nsum _{i < \omega } \alpha _{\pi(i)}$
\item
More generally, suppose that $(F_h) _{h < \omega } $ is a partition
of $ \omega$ into finite subsets,
say, $F_h = \{ j_1, \dots, j _{r(h)} \} $, for every 
$ h \in \omega$.  Then
\begin{equation*}
\nsum _{i < \omega   } \alpha _i =
\nsum _{h < \omega   } \,\pl _{j \in F_h}  \alpha _j=
\nsum _{h < \omega   } ( \alpha _{j_1} \+ \alpha _{j_1} \+ \dots \alpha _{j _{r(h)}}  )
  \end{equation*}    
   \end{enumerate}
 \end{proposition}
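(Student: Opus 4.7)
The plan is to derive each of the six claims from the corresponding finite-level statements about $\+$ collected in Proposition \ref{facts}, combined with elementary properties of suprema of monotone sequences in the ordinals. Throughout, I write $S_n = \alpha_0 \+ \dots \+ \alpha_{n-1}$ as in the definition, and form analogous $T_n$, $U_n$ when sequences are permuted or regrouped.

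For (1), iterate Proposition \ref{facts}(2) to see that $\alpha_0 + \dots + \alpha_{n-1} \leq S_n$ for every $n$, then take $\sup_n$ on both sides. For (2), let $T_n = \beta_0 \+ \dots \+ \beta_{n-1}$; monotonicity of $\+$ in each argument (Proposition \ref{facts}(1)) gives $T_n \leq S_n$ by induction on $n$, and again sup's give the result. For (3), associativity yields $S_m = S_n \+ (\alpha_n \+ \dots \+ \alpha_{m-1})$, and Proposition \ref{facts}(1) (strict monotonicity, together with the fact that $0$ is the identity for $\+$) shows $S_n \leq S_m$, with equality exactly when $\alpha_n \+ \dots \+ \alpha_{m-1} = 0$, which forces each $\alpha_i = 0$ for $n \leq i < m$. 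For (4), the inequality $S_n \leq \nsum_i \alpha_i$ is immediate from the definition of supremum. If $\alpha_i = 0$ for all $i \geq n$, then $S_m = S_n$ for every $m \geq n$ by (3), so the sup equals $S_n$. Conversely, if $S_n = \nsum_i \alpha_i$, then for every $m > n$ we must have $S_m = S_n$, and (3) again forces $\alpha_n = \dots = \alpha_{m-1} = 0$; since $m$ is arbitrary, $\alpha_i = 0$ for all $i \geq n$.

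For (5), the central observation is that commutativity and associativity of $\+$ (Proposition \ref{facts}(1)) extend at once to arbitrary finite $\+$-sums, so that such a sum depends only on the multiset of its summands. Given a permutation $\pi$ and $n < \omega$, choose $m$ with $\pi[\{0,\dots,n-1\}] \subseteq \{0,\dots,m-1\}$; then, using commutativity to rearrange and Proposition \ref{facts}(1) to absorb the extra (nonnegative) summands, one has $T_n \leq S_m \leq \nsum_i \alpha_i$. Taking $\sup_n$ gives $\nsum_i \alpha_{\pi(i)} \leq \nsum_i \alpha_i$, and the reverse inequality follows by applying the same reasoning to $\pi^{-1}$.

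For (6), set $T_h = \pl_{j \in F_h} \alpha_j$ and $U_n = T_0 \+ \dots \+ T_{n-1}$. By commutativity and associativity of finite natural sums, $U_n = \pl_{j \in F_0 \cup \dots \cup F_{n-1}} \alpha_j$. Since the $F_h$'s are finite and partition $\omega$, given $n$ one finds $N$ with $\{0,\dots,n-1\} \subseteq F_0 \cup \dots \cup F_{N-1}$, and given $N$ one finds $M$ with $F_0 \cup \dots \cup F_{N-1} \subseteq \{0,\dots,M-1\}$; the monotonicity argument used in (5) then yields $S_n \leq U_N$ and $U_N \leq S_M$. The sequences $(S_n)$ and $(U_n)$ are therefore mutually cofinal in each other and so share the same supremum, proving (6). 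The only delicate points in the whole proof are (5) and (6), where one must explicitly invoke commutativity$+$associativity at the finite level to identify rearranged partial sums as $\+$-sums over the appropriate finite multisets, and then pass to a cofinality argument; everything else is a routine sup computation.
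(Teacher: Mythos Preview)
Your proof is correct and follows essentially the same approach as the paper: parts (1)--(4) are dispatched by the paper in one line as ``immediate from the definitions and Proposition \ref{facts}(1)--(2)'', which is exactly what you spell out, and for (5)--(6) both you and the paper use the mutual-cofinality argument between the original partial sums $S_n$ and the regrouped partial sums (choosing $m$ large enough that $F_0 \cup \dots \cup F_{h-1} \subseteq \{0,\dots,m\}$, and conversely). The only cosmetic difference is that the paper notes (5) is the singleton special case of (6) rather than proving it separately, whereas you give a direct argument for (5) first; the underlying idea is identical.
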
 

\begin{proof}
(1)-(4) are immediate from the definitions and 
Proposition \ref{facts} (1)-(2).

Clause (5) is a remark in the proof of 
\cite[Proposition 4.4]{VW}.
Anyway, (5) is the particular case  
of (6) when all the $F_h$'s are singletons.

To prove (6), define, for $ h < \omega$
\begin{equation*}
T _{h} =  \left( \pl _{j \in F_0}  \alpha _j \right)
\+
\dots
\+
 \left( \pl _{j \in F _{h-1} }  \alpha _j \right)
  \end{equation*}    
Thus the right-hand of the equation in (6)
is
$\sup _{h < \omega } T_h $.  
For 
$ h < \omega$,
let $m = \max _{0 \leq \ell < h} F_ \ell$.
The maximum exists since 
each $F_ \ell$ is finite, and we are 
considering only a finite number of 
 $F_ \ell$'s at a time.
Then each summand in the expansion of $T_h$
appears in $S _{m+1} $ (taking into account multiplicities), hence, by 
(4) and monotonicity
of the natural sum,
$\nsum _{i < \omega   } \alpha _i 
\geq 
S _{m+1}
\geq
T_h $.
Hence 
$ \nsum _{i < \omega   } \alpha _i \geq
\sup_{h < \omega } T_h $.
The reverse inequality is similar and easier. 
\end{proof}

The assumption that each $F_h$ is finite in condition (6)
above is necessary. For example, take
$\alpha_i = 1$, for every $ i <\omega$,
thus
$\nsum _{i < \omega } \alpha _i = \omega $. 
Suppose that there is some infinite $F_{\bar{h}}$.
Then 
$\nsum _{j \in F_{\bar{h}}} \alpha _j = \omega $. 
If $ \omega \setminus F_{\bar{h}} \not= \emptyset  $, then
 $\nsum _{h < \omega   } \left( \nsum_{j \in F_h}  \alpha _j \right)
\geq \omega \+ 1 > \omega $. 
 
Not everything from Proposition \ref{facts}
generalizes to infinite sums. For example, the operation
$\nsum _{i < \omega   } \alpha _i $, though monotone, 
as stated in (2) above,   is 
\emph{not} strictly monotone. 
E.~g.,  $\nsum _{i < \omega } 2
=
\nsum _{i < \omega } 1 = \omega $.
Actually,
 $\nsum _{i < \omega } \alpha _i = \omega $,
for every choice of the $\alpha_i$'s such that 
 $ \alpha _i < \omega $, for every $ i < \omega$,
and such that there are infinitely many nonzero $\alpha_i$'s. 

Condition (5) above can be interpreted as a 
version of commutativity,
and (6) as a version of 
the generalized commutative-associative law.
However, not all forms of associativity hold.
We have seen that we cannot associate infinitely many summands
inside some natural sum.
Similarly, we are not allowed to  ``associate inside out''.
Indeed, 
$ \omega +1=
1 \+ \nsum _{i < \omega } 1 \not= 
\nsum _{i < \omega } 1 = \omega $.
This is 
 a general and well-known fact.
For infinitary operations, 
some very weak form of generalized associativity
implies some form of absorption.

\begin{example} \labbel{notsimult}  
Suppose that $\oplus$ is a binary operation on some set $X$,
and $a \in X$ is such that $ a \oplus x \not= x$, for every $x \in X$.  
There is no infinitary operation $\bigoplus$ 
on $X$ such that 
\begin{equation*}\labbel{oplus}
x_0 \oplus \bigoplus _{i \in \omega } x _{i+1}=
  \bigoplus _{i \in \omega } x _{i}  
\end{equation*}    
for every sequence $(x_i) _{i \in \omega } $ 
of elements of $X$.
Indeed, taking $x_i= a$, for every $ i \in \omega$,
and letting 
$x=\bigoplus _{i \in \omega } x _{i}$,
we get   
$a \oplus x = x$, a contradiction. 
\end{example}

\section{Computing the infinite natural sum} \labbel{natsumwsec}

\begin{theorem} \labbel{propmain}
If $(\alpha_i) _{i < \omega} $ is a sequence of ordinals,
then there is $ m < \omega$ such that the following
hold, for every $n \geq m$.  
\begin{equation}\labbel{eq1}
\nsum _{n\leq i < \omega } \alpha _i 
=
 \sum _{n\leq i < \omega } \alpha _i 
 \end{equation}
\item
\begin{equation}\labbel{eq}
\begin{split}     
 \nsum _{ i < \omega } \alpha _i
& =
  ( \alpha _0 \+ \dots \+ \alpha _{n-1} )
+
  \nsum _{n\leq i < \omega } \alpha _i \\
& =
  ( \alpha _0 \+ \dots \+ \alpha _{n-1} )
+
  \sum _{n\leq i < \omega } \alpha _i 
\end{split} 
\end{equation} 
 \end{theorem}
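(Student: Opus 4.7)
The plan is to find an index $m$ past which the ``high-exponent contributions'' to the natural sum have stabilized. Set $T:=\nsum_{i<\omega}\alpha_i$. If $T$ is $0$ or a successor, then $T=\sup_nS_n$ is attained at some $S_m$, and cancellativity of $\+$ (Proposition~\ref{facts}(1)) forces $\alpha_i=0$ for every $i\geq m$; the conclusion is then immediate. So assume $T$ is a limit, and let $\eta\geq 1$ be the smallest exponent appearing in the Cantor normal form of $T$.

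Decompose every ordinal as $\alpha=\alpha^{\geq}+\alpha^{<}$, where $\alpha^{\geq}$ collects the CNF-terms of exponent $\geq\eta$ and $\alpha^{<}<\omega^{\eta}$ collects the rest. By Proposition~\ref{facts}(3), this splitting is compatible with natural sums, so setting $A_n:=\pl_{i<n}\alpha_i^{\geq}$ and $B_n:=\pl_{i<n}\alpha_i^{<}$ yields $S_n=A_n+B_n$ with $B_n<\omega^{\eta}$, and one may write $A_n=\omega^{\eta}\gamma_n$ with $(\gamma_n)$ weakly increasing. The first key step is to show that $\gamma_n$ stabilizes: otherwise $\gamma^{*}:=\sup_n\gamma_n$ would be a limit, and whenever $\gamma_n<\gamma^{*}$ one could find $n'>n$ with $A_{n'}\geq A_n+\omega^{\eta}\geq S_n$, giving $\sup_nA_n=\sup_nS_n=T$, hence $T=\omega^{\eta}\gamma^{*}$; but the CNF of $\omega^{\eta}\gamma^{*}$ with $\gamma^{*}$ a limit has smallest exponent strictly above $\eta$, contradicting the choice of $\eta$. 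Let $m$ be such that $\gamma_n=\gamma$ is constant for $n\geq m$; by cancellativity of $\+$ one gets $\alpha_n^{\geq}=0$, hence $\alpha_n<\omega^{\eta}$, for every $n\geq m$. Comparing $T=\omega^{\eta}\gamma+\sup_{n\geq m}B_n$ with the fact that $\eta$ is the smallest CNF-exponent of $T$, either $B_n=0$ throughout (so $\alpha_i=0$ for $i\geq m$ and the theorem is trivial) or $\sup_nB_n=\omega^{\eta}$ and $T=\omega^{\eta}(\gamma+1)$.

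It remains to treat the nontrivial branch. Setting $U_{n,N}:=\pl_{n\leq i<N}\alpha_i$, continuity of ordinary addition in the second argument gives $T=\omega^{\eta}\gamma+\sup_N(B_n\+U_{n,N})$, and since Proposition~\ref{facts}(3) keeps $B_n\+V<\omega^{\eta}$ for every $V<\omega^{\eta}$, this forces $\nsum_{i\geq n}\alpha_i=\omega^{\eta}$ for all $n\geq m$. The main obstacle is to deduce $\sum_{i\geq n}\alpha_i=\omega^{\eta}$ as well. Suppose for contradiction the ordinary sum equals some $\beta<\omega^{\eta}$ with top CNF-exponent $\mu<\eta$; then every $\alpha_i\leq\beta$ satisfies $\alpha_i<\omega^{\mu+1}$, so Proposition~\ref{facts}(3) yields $\nsum_{i\geq n}\alpha_i\leq\omega^{\mu+1}$. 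If $\eta$ is a limit, then $\mu+1<\eta$ and this immediately contradicts $\nsum_{i\geq n}\alpha_i=\omega^{\eta}$. If $\eta=\nu+1$, write each $\alpha_i=\omega^{\nu}c_i+\alpha_i'$ with $c_i<\omega$ and $\alpha_i'<\omega^{\nu}$ and split on whether $c_i\neq 0$ infinitely often: in one case the ordinary sum already reaches $\omega^{\nu}\cdot\omega=\omega^{\eta}$, contradicting $\sum_{i\geq n}\alpha_i=\beta$; in the other the natural sum is bounded by $\omega^{\nu}(C+1)<\omega^{\eta}$ for some finite $C$, contradicting $\nsum_{i\geq n}\alpha_i=\omega^{\eta}$. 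Once $\sum_{i\geq n}\alpha_i=\omega^{\eta}$ is in hand, both displayed equations follow from $T=\omega^{\eta}\gamma+\omega^{\eta}=S_n+\omega^{\eta}=S_n+\sum_{i\geq n}\alpha_i=S_n+\nsum_{i\geq n}\alpha_i$, using $B_n+\omega^{\eta}=\omega^{\eta}$.
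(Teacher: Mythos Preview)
Your proof is correct, but it takes a genuinely different route from the paper's. The paper works \emph{bottom-up}: it defines $\xi$ directly from the sequence as the least ordinal with $\{\,i:\alpha_i\geq\omega^{\xi}\,\}$ finite, takes $m$ to be the first index beyond which $\alpha_i<\omega^{\xi}$, and then verifies in one stroke that both tail sums equal $\omega^{\xi}$ (a short case split on $\xi$ successor vs.\ limit for the lower bound, Proposition~\ref{facts}(3) for the upper bound). Equation~\eqref{eq} then falls out from Proposition~\ref{facts}(4). You instead work \emph{top-down}: you first form $T=\nsum_i\alpha_i$, read off $\eta$ as its least CNF exponent, and then argue via a stabilization lemma for the ``high parts'' $A_n=\omega^{\eta}\gamma_n$ that $\alpha_i<\omega^{\eta}$ eventually; the equality $\sum_{i\geq n}\alpha_i=\omega^{\eta}$ is then obtained by a second contradiction argument with its own successor/limit split on $\eta$. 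The two key ordinals coincide ($\eta=\xi$), so the endpoints agree, but the paper's path is shorter and more constructive: it identifies $m$ and the tail value $\omega^{\xi}$ without computing $T$ first, which is exactly what Corollary~\ref{corexpl} later exploits. Your approach, on the other hand, makes the CNF structure of $T$ itself explicit (e.g.\ $T=\omega^{\eta}(\gamma+1)$ in the nontrivial case), which is pleasant extra information but not needed for the theorem.
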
 

\begin{proof}
Let $\xi$ be the smallest ordinal such that 
the set $\{ i \in \omega \mid \alpha _i \geq \omega ^ \xi  \}$
is finite. 
Let $m$ be the smallest index such that 
$\alpha_i < \omega ^ \xi $,
for every $i \geq m$.   
The definition of $\xi$
 assures the existence of such an $m$. 
If $\xi=0$, then all but finitely many $\alpha_i$'s are $0$ 
and the proposition is trivial.

Suppose that 
$\xi$  is  a successor ordinal, say
 $\xi= \varepsilon +1$.
By the minimality of $\xi$, 
the set $\{ i \in \omega \mid \alpha _i \geq \omega ^ \varepsilon  \}$
is infinite, hence unbounded in $ \omega$.
Then $\nsum _{n\leq i < \omega } \alpha _i 
\geq
 \sum _{n\leq i < \omega } \alpha _i 
\geq 
\omega ^ \varepsilon \omega= \omega ^{\varepsilon+1} = \omega ^ \xi $.
Suppose that $\xi$ is limit.
By the definition of $\xi$,
we have that, for every $\varepsilon < \xi$,
there are infinitely many $i< \omega$   
such that $ \alpha _i \geq \omega ^ \varepsilon $. 
In particular, we can choose such an $i$ with $i \geq n$.
Then 
$ \sum _{n\leq i < \omega } \alpha _i \geq \alpha _i \geq \omega ^ \varepsilon $.
Since this holds for every $\varepsilon < \xi$,
we get  
$  \nsum _{n\leq i < \omega } \alpha _i \geq 
\sum _{n\leq i < \omega } \alpha _i \geq
\sup _{ \varepsilon < \xi}  \omega ^ \varepsilon =
\omega ^{\xi}$.
The inequality 
$  \nsum _{n\leq i < \omega } \alpha _i \geq 
\sum _{n\leq i < \omega } \alpha _i \geq \omega ^{\xi}$
is proved, no matter whether  $\xi$ is successor or limit. 

On the other hand, because of the definition of $m$,
if $i \geq n \geq m$, then $ \alpha _i < \omega ^ \xi$.   
By Proposition \ref{facts}(3),
$ \alpha _ n \+ \dots \+ \alpha _{\ell-1}< \omega ^ \xi$,
for every $ \ell \geq n$.
Hence
$\sum _{n\leq i < \omega } \alpha _i 
\leq
 \nsum _{n\leq i < \omega } \alpha _i 
=
\sup _{ \ell < \omega }  (\alpha _ n \+ \dots \+ \alpha _{\ell-1})
\leq \omega ^ \xi $.
In conclusion,
\begin{equation}\labbel{!}    
\nsum _{n\leq i < \omega } \alpha _i 
=
 \sum _{n\leq i < \omega } \alpha _i 
=
 \omega ^ \xi 
  \end{equation}
thus we have proved \eqref{eq1}. 

Let us now prove \eqref{eq}.
The inequality  
$ \nsum _{ i < \omega } \alpha _i
 \geq
  ( \alpha _0 \+ \dots \+ \alpha _{n-1} )
+
  \nsum _{n\leq i < \omega } \alpha _i$
 is trivial, since every ``partial sum'' on the right
is bounded by the partial sum on the left having the same length,
by Proposition \ref{facts}(2).
 For the other direction,
and recalling that 
$ S_ {\ell}$ denotes
$ \alpha _ 0\+ \dots \+ \alpha _{\ell-1}$,
observe that, by associativity, for every $\ell \geq n$,
we have 
$ S_ {\ell}=
S_n \+
 \alpha _ n \+ \dots \+ \alpha _{\ell-1}
<
S_n +
  \omega ^ \xi 
=
S_n + \nsum _{n\leq i < \omega } \alpha _i$,
where the strict inequality follows from repeated
applications of Proposition \ref{facts}(4), 
since 
$\alpha _ n , \dots, \alpha _{\ell-1}
<
  \omega ^ \xi $.
The last identity is from 
 equation \eqref{!}.
Since 
$\nsum _{ i < \omega } \alpha _i = \sup _{\ell < \omega } S_ {\ell}$
and since $S_ \ell$ is increasing, 
we get 
$\nsum _{ i < \omega } \alpha _i \leq S_n + \nsum _{n\leq i < \omega } \alpha _i$.

The identity
$\nsum _{ i < \omega } \alpha _i
 =
  ( \alpha _0 \+ \dots \+ \alpha _{n-1} )
+
  \sum _{n\leq i < \omega } \alpha _i$
is now immediate from 
\eqref{eq1}. It can be also proved in a way similar to above.
\end{proof}

Notice that the sum $+$ in equation \eqref{eq} 
cannot be replaced by a natural sum $\+$, that is,
we do not have, in general,
$ \nsum _{ i < \omega } \alpha _i
 =
S_n
\+
  \nsum _{n\leq i < \omega } \alpha _i $,
nor we have
$
\nsum _{ i < \omega } \alpha _i
 =
S_n
\+
  \sum _{n\leq i < \omega } \alpha _i 
$.
This is similar to the argument in Example \ref{notsimult}:
just take $\alpha_i =1$, for every $i \in I$;
then   
$\nsum _{ i < \omega } \alpha _i = \omega $
but
$ 
S_n
\+
  \nsum _{n\leq i < \omega } \alpha _i =
 S_n
\+
  \sum _{n\leq i < \omega } \alpha _i
 = n \+ \omega =  \omega +n$.
However, in
Corollary \ref{corexpl}
we shall show that the computation of a
countable natural sum can be actually reduced
to the computation of some finite natural sum.

\begin{remark} \labbel{bastinf}
Notice that equation \eqref{eq1} in Theorem \ref{propmain},
together with Proposition \ref{factsw}(5),
imply that
if $(\alpha_i) _{i < \omega} $ is a sequence of ordinals,
$m$ is given by Theorem \ref{propmain}, and $n \geq m$, then 
$
\sum _{n \leq i < \omega } \alpha _i 
=
 \nsum _{n \leq i < \omega } \alpha _{ i}  
=
 \sum _{n \leq i < \omega } \alpha _{ \pi (i)}$,
for every permutation $\pi$ of $[n, \omega)$. 
Actually, equation \eqref{!}  in the proof of Theorem \ref{propmain} shows that
it is enough to assume that
$\pi$ is a bijection
from 
$[n, \omega)$ to
$[n', \omega)$,
for some $n' \geq m$
(equation \eqref{!} does not hold if $\xi=0$, but this case is trivial). 

The result in the present remark
 can be obtained also as a consequence of a theorem 
by Sierpinski \cite{Sie}, asserting that a countable 
sum of nondecreasing ordinals is invariant under permutations.
Just notice that every sequence of ordinals is nondecreasing from some point on.
On the other hand,  Sierpinski's result is immediate from
equation \eqref{!}. 
Thus parts of the present note
can be seen as an extension of results from
\cite{Sie} to natural sums.
 \end{remark}

\section{Some kinds of mixed sums} \labbel{mixsec}

The definition of  a mixed sum of two ordinals
can be obviously extended to deal with 
infinitely many ordinals.

\begin{definition} \labbel{infmix}
Let $(\alpha_i) _{i \in I} $
be any sequence of ordinals (with no restriction on 
the cardinality of $I$). 
An ordinal $\gamma$ is a \emph{mixed sum}
of $(\alpha_i) _{i \in I} $  if there are 
pairwise disjoint
subsets $(A_i) _{i \in I} $ of $\gamma$
such that 
$\bigcup _{i \in I} A_i = \gamma  $
and, for every $i \in I$,
$A_i$ has order type $\alpha_i$,
with respect  to the order induced on $A_i$ 
by $\gamma$.    

In the above situation, we say
that $\gamma$ is a mixed sum of
$(\alpha_i) _{i \in I} $
\emph{realized by 
$(A_i) _{i \in I} $},
or simply that 
$(A_i) _{i \in I} $
is a \emph{realization of $\gamma$}.
Notice that $\alpha_i$ can be recovered by $A_i$,   
as embedded in $\gamma$.
 \end{definition}   

Notice that we could have given the above
definition just under the assumption that $\gamma$ and the
$\alpha_i$'s are linearly ordered sets,  not necessarily
well ordered. In this respect, notice that
any finite mixed sum of well ordered sets is itself necessarily 
well ordered; however, in case $I$ is infinite, 
the $\alpha_i$'s could ``mix themselves'' to a non well ordered set.
For example, starting with countably many $1$'s,
we could obtain \emph{every} countably infinite
 linear order as a mixed sum.
Throughout this note, however, 
and no matter how interesting the general case of linear orders is,
we shall always assume that $\gamma$ is an ordinal,
that is, well ordered.

\begin{theorem} \labbel{carr}
\emph{(Carruth \cite{Car}, Neumer \cite{neumer})} 
For every $ n < \omega$ and ordinal numbers
 $\alpha_0, \dots, \alpha _n$,  
the largest mixed
sum of $( \alpha_i) _{i \leq n} $ exists and is 
$\alpha_0 \+ \alpha _1 \+ \dots \+ \alpha _n $.
 \end{theorem}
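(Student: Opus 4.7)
The plan is to reduce the statement to the case of two summands via induction on $n$ (using associativity of $\+$), and for the two-summand case to prove the upper bound $\gamma \leq \alpha \+ \beta$ by transfinite induction on $\gamma$ and to prove attainability by a direct block decomposition of $\alpha \+ \beta$ using Cantor normal forms.

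For the inductive step in $n$, assume the theorem holds for $n-1$ summands and for $n=1$. Given a mixed sum $\gamma$ of $(\alpha_i)_{i \leq n}$ realized by $(A_i)_{i \leq n}$, let $\beta$ denote the order type of $\bigcup_{i<n} A_i$; then $\beta$ is a mixed sum of $(\alpha_i)_{i < n}$ and $\gamma$ is a mixed sum of $\beta$ and $\alpha_n$. The induction hypothesis gives $\beta \leq \alpha_0 \+ \dots \+ \alpha_{n-1}$, and the $n=1$ case gives $\gamma \leq \beta \+ \alpha_n$; monotonicity and associativity of $\+$ (Proposition \ref{facts}(1)) then yield $\gamma \leq \alpha_0 \+ \dots \+ \alpha_n$. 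Conversely, given a realization of $\alpha_0 \+ \dots \+ \alpha_{n-1}$ as a mixed sum of $(\alpha_i)_{i<n}$ and a realization of $(\alpha_0 \+ \dots \+ \alpha_{n-1}) \+ \alpha_n$ as a mixed sum of the two ordinals $\alpha_0 \+ \dots \+ \alpha_{n-1}$ and $\alpha_n$, one refines the latter by transporting the former, via order isomorphism, into the portion of order type $\alpha_0 \+ \dots \+ \alpha_{n-1}$, yielding a realization of $\alpha_0 \+ \dots \+ \alpha_n$ by $(\alpha_i)_{i \leq n}$.

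For the base case $n=1$, the upper bound $\gamma \leq \alpha \+ \beta$ whenever $\gamma = A \cup B$ realizes $\gamma$ as a mixed sum of $\alpha$ and $\beta$ would be proved by transfinite induction on $\gamma$. For any $\delta < \gamma$, the sets $A \cap \delta$ and $B \cap \delta$ are initial segments of $A$ and $B$ (since $\delta$ is an initial segment of $\gamma$, any $a' \in A$ below some $a \in A \cap \delta$ already lies in $\delta$), of order types $\alpha' \leq \alpha$ and $\beta' \leq \beta$; since $\delta \subsetneq \gamma = A \cup B$, at least one inequality is strict. Then $\delta$ is a mixed sum of $\alpha'$ and $\beta'$, the induction hypothesis gives $\delta \leq \alpha' \+ \beta'$, and strict monotonicity of $\+$ (Proposition \ref{facts}(1)) gives $\alpha' \+ \beta' < \alpha \+ \beta$. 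Hence every $\delta < \gamma$ satisfies $\delta < \alpha \+ \beta$, so $\gamma \leq \alpha \+ \beta$.

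For attainability at $n=1$, write $\alpha$ and $\beta$ in a common Cantor normal form with exponents $\xi_k > \dots > \xi_0$ and non-negative integer coefficients $r_j, s_j$ (inserting zero coefficients as needed). The ordinal $\alpha \+ \beta = \omega^{\xi_k}(r_k+s_k) + \dots + \omega^{\xi_0}(r_0+s_0)$ decomposes canonically as consecutive blocks of sizes $\omega^{\xi_j}$, with $r_j + s_j$ blocks at each level $\xi_j$; assigning the first $r_j$ blocks at each level to $A$ and the remaining $s_j$ blocks to $B$ gives a partition whose induced order types are $\omega^{\xi_k} r_k + \dots + \omega^{\xi_0} r_0 = \alpha$ and $\omega^{\xi_k} s_k + \dots + \omega^{\xi_0} s_0 = \beta$. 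I expect the main subtlety to be twofold: in the upper-bound induction, one must verify that at least one of $\alpha' < \alpha$ or $\beta' < \beta$ is strict (which follows because $\delta \subsetneq \gamma = A \cup B$ forces $A \cap \delta \subsetneq A$ or $B \cap \delta \subsetneq B$); and for attainability, one must check that the induced order on the non-contiguous union of blocks assigned to $A$ genuinely has order type $\alpha$, not some smaller rearrangement. Once these bookkeeping points are secured, strict monotonicity of $\+$ closes the argument.
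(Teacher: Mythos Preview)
The paper does not give its own proof of this theorem: it is stated with attribution to Carruth and Neumer and used as a black box thereafter. So there is no ``paper's proof'' to compare against.

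That said, your argument is correct and is essentially the classical one. A couple of minor remarks on presentation. In the upper-bound step, your transfinite induction on $\gamma$ is clean; the point you flag about strictness is indeed the only thing to check, and your justification (some $x$ with $\delta \leq x < \gamma$ lies in $A$ or in $B$, hence $A\cap\delta\subsetneq A$ or $B\cap\delta\subsetneq B$, and initial segments of well-orders have strictly smaller type) is exactly right. In the attainability step, the block construction works as you describe; the reason the induced order on $A$ has type $\alpha$ rather than some absorption thereof is that the $A$-blocks occur in the ambient order with \emph{strictly decreasing} exponents $\xi_k > \dots > \xi_0$, so the concatenation is already a Cantor normal form and no collapse occurs. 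You might make this sentence explicit, since it is precisely the point you yourself identified as the subtlety.

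One tiny nit in the reduction step: you invoke ``the $n=1$ case'' for the inequality $\gamma \leq \beta \+ \alpha_n$, but you actually need the two-summand upper bound for the specific pair $(\beta,\alpha_n)$, which you have already established unconditionally (not by induction on $n$). As written this is fine, just be careful not to phrase it as if it depended on the outer induction.
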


As we hinted in the introduction,
and contrary to the finite case,
the set of all the mixed sums of an infinite sequence of ordinals
need not have a maximum. 
If we take 
$\alpha_i = 1$ for every $i< \omega $,
then every infinite countable ordinal  
is a mixed sum of 
$(\alpha_i) _{i \in \omega } $, thus the supremum of all the mixed sums
of $(\alpha_i) _{i \in  \omega } $
is $ \omega_1$,
which is \emph{not} a mixed sum of $(\alpha_i) _{i \in \omega } $.
Hence there is some interest in restricting ourselves 
to well-behaved mixed sums

\begin{definition} \labbel{wellmixed} 
We say that 
$\gamma$ is a \emph{left-finite} mixed sum of
$(\alpha_i) _{i \in I} $
if $\gamma$ can be realized as a mixed sum by
$(A_i) _{i \in I} $ in such a way that,
for every $ \delta  <\gamma $, 
the set
$\{ i \in I \mid A_i \cap \delta    \not= \emptyset   \}$ 
is finite;
in words, for every $ \delta  < \gamma $, the predecessors of $\delta$ are all 
taken from finitely many $A_i$'s. 

Given a realization $(A_i) _{i \in I} $ of $\gamma$ 
and $i \in I$, 
we say that
$A_i$ is \emph{convex (in $\gamma$)} if
 $[a, a']_ \gamma  = \{ \delta \in \gamma  \mid a \leq \delta \leq a' \}\subseteq A_i$, whenever  $a< a' \in A_i$.

We say that 
 $\gamma$ is a \emph{piecewise convex}
 (resp., an \emph{almost piecewise convex})
mixed sum of $(\alpha_i) _{i \in I} $
if $\gamma$  can be realized  in such a way that
all  the $A_i$'s 
(resp., all but  a finite number of the $A_i$'s)
are convex in $\gamma$.
For brevity, we shall write 
\emph{pw-convex} in place of
  piecewise convex.
\end{definition}   

If
 $\gamma$ is a pw-convex
mixed sum of $(\alpha_i) _{i \in I} $,
as realized by $(A_i) _{i \in I} $,
then, for every $i \not= j \in I$ and 
$\delta, \varepsilon  \in A_i$,  
$ \delta ' , \varepsilon' \in A_j$,
we have that
$\delta < \delta ' $ 
if and only if 
$ \varepsilon  < \varepsilon '$.
In this way, if each
$A_i$ is nonempty, the order on $\gamma$ 
induces an order (in fact, a well order)
on $I$.  
Hence we can reindex 
$( A_i) _{i \in I} $ 
as
$( A_{\pi(\iota)}) _{\iota < \theta } $
for some ordinal $\theta$ 
and some bijection
$\pi: \theta \to I$
in such a way that
 $\delta < \delta ' $,
whenever
$\delta \in A _{\pi(\iota)} $,
$ \delta '  \in A _{\pi(\iota')} $
and
$ \iota < \iota'$. 
Then an easy induction shows that
$\gamma= \sum _{\iota < \theta} \alpha _{\pi(\iota)} $. 
If in addition $\gamma$ is left finite, then
necessarily $\theta \leq \omega $.

Conversely, if $\gamma= \sum _{\iota < \theta} \alpha _{\pi(\iota)}$,
for some reindexing of the $\alpha_i$'s,  
then trivially $\gamma$ is a pw-convex
mixed sum of $(\alpha_i) _{i \in I} $,
and if $\theta \leq \omega $,
then $\gamma$ is also left finite. 
We have proved the next proposition. 

\begin{proposition} \labbel{sum}
 Suppose that $( \alpha_i) _{i \in I} $ is a sequence of ordinals,
and $\alpha_i > 0$, for every $i \in I$.
Then $\gamma$ is a pw-convex (pw-convex  and left-finite) mixed sum of
$( \alpha_i) _{i \in I} $ if and only if there are
some ordinal $\theta$ (with $ \theta \leq \omega $)
and a bijection 
$\pi: \theta \to I$ such that 
$ \gamma = \sum _{\iota < \theta} \alpha _{\pi( \iota)} $.   
\end{proposition}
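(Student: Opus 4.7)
The plan is to formalize the argument sketched in the two paragraphs immediately preceding the statement. For the forward direction, suppose $\gamma$ is a pw-convex mixed sum of $(\alpha_i)_{i \in I}$ realized by $(A_i)_{i \in I}$. Since each $\alpha_i > 0$, each $A_i$ is nonempty, so the observation already recorded in the text (that for $i \not= j$ the comparison between any element of $A_i$ and any element of $A_j$ is independent of the chosen representatives) makes the relation $i \prec j$, defined by ``some (equivalently, every) element of $A_i$ is less than some (equivalently, every) element of $A_j$'', a well-defined linear order on $I$. I would then verify that $\prec$ is a well-order: any nonempty $J \subseteq I$ determines the nonempty set $\bigcup_{j \in J} A_j \subseteq \gamma$, whose minimum lies in a unique $A_{j_0}$, and $j_0$ is $\prec$-least in $J$. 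Let $\theta$ be the order type of $(I, \prec)$ and let $\pi : \theta \to I$ be the corresponding bijection.

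It remains to check $\gamma = \sum_{\iota < \theta} \alpha_{\pi(\iota)}$. The proof is by transfinite induction on $\iota \leq \theta$, showing that $B_\iota := \bigcup_{\iota' < \iota} A_{\pi(\iota')}$ is an initial segment of $\gamma$ of order type $\sum_{\iota' < \iota} \alpha_{\pi(\iota')}$. Convexity of each $A_{\pi(\iota)}$ and the definition of $\prec$ guarantee that $A_{\pi(\iota)}$ sits on top of $B_\iota$ and contributes its own order type $\alpha_{\pi(\iota)}$, giving the successor step, while the limit step follows by taking unions of initial segments. Conversely, given $\gamma = \sum_{\iota < \theta} \alpha_{\pi(\iota)}$, define $A_{\pi(\iota)}$ to be the half-open interval $[\sigma_\iota, \sigma_{\iota+1})$ of $\gamma$, where $\sigma_\iota := \sum_{\iota' < \iota} \alpha_{\pi(\iota')}$; these sets are pairwise disjoint, convex, of the correct order types, and together cover $\gamma$.

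For the left-finite addendum: from a left-finite pw-convex realization, if $\theta > \omega$, then the minimum $\delta$ of $A_{\pi(\omega)}$ satisfies $A_{\pi(\iota)} \subseteq \delta$ for every $\iota < \omega$ (by the well-ordering on $I$ induced by $\prec$), contradicting left-finiteness; hence $\theta \leq \omega$. Conversely, when $\theta \leq \omega$, any $\delta < \gamma$ belongs to $A_{\pi(\iota_0)}$ for a unique $\iota_0 < \theta$, and only the finitely many indices $\iota \leq \iota_0$ can satisfy $A_{\pi(\iota)} \cap \delta \not= \emptyset$. The main subtlety I expect is the transfinite induction identifying $\gamma$ with $\sum_{\iota < \theta} \alpha_{\pi(\iota)}$, especially at limit stages, where one must check that $\bigcup_{\iota' < \lambda} A_{\pi(\iota')}$ is exactly an initial segment of $\gamma$ with no elements of $\gamma$ squeezed between this segment and the later blocks; this is where convexity of each $A_{\pi(\iota)}$ is essential and where the well-ordering of $(I, \prec)$ is used most heavily.
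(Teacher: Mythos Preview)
Your argument is correct and is precisely the approach the paper takes: the two paragraphs preceding the proposition already record the induced well-order on $I$, the reindexing $\pi:\theta\to I$, the ``easy induction'' giving $\gamma=\sum_{\iota<\theta}\alpha_{\pi(\iota)}$, and the left-finiteness$\Leftrightarrow\theta\leq\omega$ correspondence; you have simply filled in the details the paper leaves implicit.
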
 

\begin{remark} \labbel{lf}
There might be infinitely many
left-finite mixed sums of the same sequence.
Indeed, take $\alpha_i = \omega $, for every $i < \omega$.
Since $ \omega$ is the union of countably many disjoint countably
infinite sets,
we see that $ \omega$ is a (necessarily left-finite) mixed sum
of $( \alpha_i) _{i < \omega } $.
By moving just one copy of $ \omega$ ``to the bottom''
we get that also $ \omega+ \omega $
 is a left-finite mixed sum
of $( \alpha_i) _{i < \omega } $.  
Iterating, for every $ n < \omega$ 
we get $ \omega n$ as a left-finite mixed sum
of $( \alpha_i) _{i < \omega } $. 
Also $ \omega^ 2$ 
is a left-finite mixed sum
of $( \alpha_i) _{i < \omega } $;
by Proposition \ref{sum} 
it is the only one which is left-finite and pw-convex;
actually, it is the only one which is left-finite and almost pw-convex.
 \end{remark}   

\begin{theorem} \labbel{main}
If $(\alpha_i) _{i < \omega} $ is a sequence of ordinals, 
then $\nsum _{i < \omega } \alpha _i$ 
is a mixed sum of
$(\alpha_i) _{i < \omega} $.
In fact, $\nsum _{i < \omega } \alpha _i$
is the largest left-finite mixed sum
of $(\alpha_i) _{i < \omega} $, and also
the largest left-finite  and almost pw-convex
mixed sum of $(\alpha_i) _{i < \omega } $.
 \end{theorem}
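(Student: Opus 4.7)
The plan is to establish two statements: (a) an explicit realization of $\nsum_{i<\omega}\alpha_i$ as a left-finite, almost pw-convex mixed sum of $(\alpha_i)_{i<\omega}$, which takes care of both the ``mixed sum'' assertion and the existence parts of the maximality assertions; and (b) the upper bound $\gamma\le\nsum_{i<\omega}\alpha_i$ for every left-finite mixed sum $\gamma$ of $(\alpha_i)_{i<\omega}$. Since any almost pw-convex left-finite mixed sum is in particular left-finite, (b) yields both maximality claims simultaneously.

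For (a), let $m$ be the index given by Theorem \ref{propmain}, so that $\nsum_{i<\omega}\alpha_i = S_m + \sum_{m\le i<\omega}\alpha_i$. Apply Carruth's Theorem \ref{carr} to obtain pairwise disjoint subsets $A_0,\dots,A_{m-1}$ of $S_m$ that realize $S_m$ as a finite mixed sum of $\alpha_0,\dots,\alpha_{m-1}$. For $i\ge m$, set $A_i = [\,S_m + \sum_{m\le j<i}\alpha_j,\ S_m + \sum_{m\le j\le i}\alpha_j\,)$, a convex block of order type $\alpha_i$. The family $(A_i)_{i<\omega}$ partitions $\nsum_{i<\omega}\alpha_i$ with the required order types, so realizes it as a mixed sum. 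Any $\delta<\nsum_{i<\omega}\alpha_i$ lies below some $S_m + \sum_{m\le j<k}\alpha_j$ for $k$ finite, hence meets only the finitely many $A_0,\dots,A_{k-1}$; this gives left-finiteness. Since the $A_i$ for $i\ge m$ are convex by construction, only $A_0,\dots,A_{m-1}$ can fail convexity, so the realization is almost pw-convex.

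For (b), let $\gamma$ be a left-finite mixed sum of $(\alpha_i)_{i<\omega}$ realized by $(B_i)_{i<\omega}$. It suffices to show $\delta+1\le\nsum_{i<\omega}\alpha_i$ for every $\delta<\gamma$. Fix such a $\delta$; left-finiteness at $\delta$, together with the single index $i^*$ with $\delta\in B_{i^*}$, produces a finite $J\subseteq\omega$ with $[0,\delta+1)\subseteq\bigcup_{i\in J}B_i$. Letting $\beta_i$ be the order type of $B_i\cap[0,\delta+1)$, each $\beta_i\le\alpha_i$ because subsets of an ordinal have smaller order type, and $[0,\delta+1)$ is a \emph{finite} mixed sum of $(\beta_i)_{i\in J}$. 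Applying the finite Carruth Theorem and the monotonicity of $\+$ (Proposition \ref{facts}(1)):
\begin{equation*}
\delta+1 \le \pl_{i\in J}\beta_i \le \pl_{i\in J}\alpha_i \le S_N \le \nsum_{i<\omega}\alpha_i,
\end{equation*}
for any $N>\max J$, where the penultimate step comes from padding the finite natural sum with the remaining $\alpha_j$ for $j<N$, $j\notin J$.

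The main subtlety is the use of $\delta+1$ rather than $\delta$ in (b): left-finiteness only applies at points strictly below $\gamma$, yet bounding merely $\delta$ would give $\gamma\le\nsum_{i<\omega}\alpha_i + 1$, losing a unit. Observing that $\delta+1$ itself is still covered by finitely many $B_i$'s neatly closes the gap without a case split on whether $\nsum_{i<\omega}\alpha_i$ is a successor or a limit ordinal, and reduces everything to one application of the finite Carruth Theorem combined with Theorem \ref{propmain}.
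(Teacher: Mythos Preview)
Your proof is correct and follows essentially the same approach as the paper: part (a) uses Theorem~\ref{propmain} together with Carruth's theorem to realize the infinite natural sum as a left-finite almost pw-convex mixed sum, and part (b) bounds an arbitrary left-finite mixed sum by applying the finite Carruth theorem to proper initial segments. The only difference is cosmetic: where the paper splits into the cases ``all but finitely many $\alpha_i$ vanish'' versus ``$\gamma$ is limit'', your $\delta+1$ trick (adjoining the single index $i^*$ with $\delta\in B_{i^*}$) handles both cases uniformly.
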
 

\begin{proof} 
By equation \eqref{eq} in  
Theorem \ref{propmain}, 
we have
$\nsum _{i < \omega } \alpha _i =
(\alpha _0 \+ \dots \+ \alpha _{n-1}) + \sum _{n \leq i < \omega } \alpha _i$,
for some $ n < \omega$.
By Theorem \ref{carr}, 
$ \gamma _1 =\alpha _0 \+ \dots \+ \alpha _{n-1}$
is a mixed sum of
$ \alpha _0, \dots,  \alpha _{n-1}$.
By the easy part of Proposition \ref{sum},
$ \gamma _2= \sum _{n \leq i < \omega } \alpha _i $ 
is a left-finite pw-convex mixed sum
of $( \alpha_i) _{n \leq i < \omega } $.
Putting   the members of the realization of
$\gamma_1$ at the bottom, and     
 the members of the realization of
$\gamma_2$ at the top,
we realize
$\nsum _{i < \omega } \alpha _i =
 \gamma _1 + \gamma _2$
as a 
left-finite and almost pw-convex mixed sum
of $( \alpha_i) _{ i < \omega } $.

To finish the proof of the  theorem it is enough to show that if
$\gamma$ is any left-finite mixed sum
of $(\alpha_i) _{i < \omega} $, then 
$\gamma \leq \nsum _{i < \omega } \alpha _i$. 
Let $\gamma$ be a left-finite mixed sum
of $(\alpha_i) _{i < \omega} $ as realized by
$(A_i) _{i < \omega} $.
If all but a finite number of the $\alpha_i$'s are $0$,
then the result is immediate from Theorem \ref{carr}. Otherwise, 
left finiteness implies that $\gamma$ is a limit ordinal.
If $\gamma' < \gamma $, then
 $( \gamma' \cap A_i) _{i < \omega} $
witnesses that $\gamma'$
is a mixed sum of $( \beta _i) _{i \in I} $,
where, for every $ i < \omega$,  $\beta_i$ is the order type of 
$\gamma \cap A_i$; thus $\beta_i \leq \alpha _i$.    
Left finiteness implies that only a finite number
of the $\beta_i$'s are nonzero, thus,
again by Theorem \ref{carr}, 
$\gamma' \leq \beta _{i_1} \+ \dots \+  \beta _{i_ \ell}$,
for certain distinct indices $i_1, \dots , i_ \ell$.
 Taking $n = \sup \{ i_1, \dots , i_ \ell\} $, we get 
$\gamma' \leq \beta _{i_1} \+ \dots \+  \beta _{i_ \ell}
\leq 
 \alpha  _{i_1} \+ \dots \+  \alpha  _{i_ \ell}
\leq 
\alpha _0\+ \dots \+ \alpha _n
<
 \nsum _{i < \omega } \alpha _i
$.
Since $\gamma$ is limit
and $\gamma' \leq  \nsum _{i < \omega } \alpha _i$,
for every $\gamma' < \gamma $,
we get   $\gamma \leq  \nsum _{i < \omega } \alpha _i$.
\end{proof}

\section{Expressing sums in terms of the normal form} \labbel{cantsec}

The proof  of
Theorem  \ref{propmain}
gives slightly more.
Let $\alpha$ and 
$\xi$ be ordinals,
and express $\alpha$ in Cantor normal 
form as
$
 \omega ^ {\eta_k} r_k + 
\dots
+ \omega ^ {\eta_0}r_0$.
The ordinal $\alpha^{\restriction  \xi}$, in words,  
\emph{$\alpha$ truncated at the $\xi^{th}$ exponent of $ \omega$}, 
is
$
 \omega ^ {\eta_k} r_k + 
\dots
+ \omega ^ {\eta_ \ell}r_\ell$,
where $\ell$ is the smallest index
such that $\ell \geq \xi$.   
The above definition should be intended in the sense that
$\alpha^{\restriction  \xi}=0$
in case that $\alpha < \omega ^ \xi$.  

\begin{corollary} \labbel{corexpl}
Suppose that $( \alpha_i) _{i < \omega } $ 
is a sequence of ordinals, and
let $ \xi$ be the smallest ordinal
such that $\{ i < \omega \mid \alpha _i \geq \omega ^\xi\}$
is finite.
Enumerate those $\alpha_i$'s such that 
 $\alpha _i \geq \omega ^\xi$ as
$ \alpha _{i_0}, \dots, \alpha _{i_h}  $,
with $i_0 < \dots < i_h$
(the sequence might be empty).
If $\xi>0$, then

\begin{align} \labbel{trunc}
 \nsum _{ i < \omega } \alpha _i
& =
  ( \alpha _{i_0} \+ \dots \+ \alpha _{i_h}) 
+ \omega ^\xi 
=
 \alpha _{i_0}^{\restriction  \xi} \+ \dots \+ \alpha _{i_h}^{\restriction  \xi}
\+ \omega ^\xi \ \text{ and } 
 \\
\labbel{truncsum}
 \sum _{ i < \omega } \alpha _i
& =
   \alpha _{i_0} + \dots + \alpha _{i_h} 
+ \omega ^\xi =
   \alpha _{i_0}^{\restriction  \xi} + \dots + \alpha _{i_h}^{\restriction  \xi} 
+ \omega ^\xi\, ;
\end{align}
moreover, for every $\varepsilon< \xi$,
we have
\begin{equation*}\labbel{ult}    
 \nsum _{ i < \omega } \alpha _i=
 \nsum _{ i < \omega } \alpha _i^{\restriction  \varepsilon }
\qquad \text{  and  } \qquad
 \sum _{ i < \omega } \alpha _i=
 \sum _{ i < \omega } \alpha _i^{\restriction  \varepsilon }
  \end{equation*}  
\end{corollary}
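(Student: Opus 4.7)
The plan is to reduce everything to Theorem \ref{propmain}, and more specifically to equation \eqref{!} from its proof, which asserts that $\sum_{n \leq i < \omega} \alpha_i = \nsum_{n \leq i < \omega} \alpha_i = \omega^\xi$ whenever $n \geq m$ (with $m$ as in the proof of Theorem \ref{propmain}; note that $i_h < m$, so one may pick any $n \geq m$). The rest is a sequence of absorption steps.

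First I would pick such an $n$. By commutativity and associativity of $\+$ we split $\alpha_0 \+ \cdots \+ \alpha_{n-1} = (\alpha_{i_0} \+ \cdots \+ \alpha_{i_h}) \+ \beta$, where $\beta$ is the natural sum of the remaining summands; each of those is $< \omega^\xi$ by choice of $m$, hence $\beta < \omega^\xi$ by Proposition \ref{facts}(3). Combining with Theorem \ref{propmain} and Proposition \ref{facts}(5) yields
\[
\nsum_{i<\omega} \alpha_i = (\alpha_{i_0} \+ \cdots \+ \alpha_{i_h} \+ \beta) + \omega^\xi = (\alpha_{i_0} \+ \cdots \+ \alpha_{i_h}) + \omega^\xi,
\]
which is the first equality of \eqref{trunc}. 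For the second, write each $\alpha_{i_j} = \alpha_{i_j}^{\restriction \xi} \+ \tau_j$ with $\tau_j < \omega^\xi$ (the Cantor tail of $\alpha_{i_j}$ with exponents below $\xi$), gather the $\tau_j$'s into a single $\tau < \omega^\xi$, absorb $\tau$ into the trailing $\omega^\xi$ via Proposition \ref{facts}(5) once more, and observe directly from Definition \ref{natsum} that $\gamma + \omega^\xi = \gamma \+ \omega^\xi$ whenever every exponent in the Cantor normal form of $\gamma$ is $\geq \xi$.

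For \eqref{truncsum} I would split $\sum_{i<\omega} \alpha_i = (\alpha_0 + \cdots + \alpha_{n-1}) + \sum_{n \leq i < \omega} \alpha_i$ by associativity of the ordinal sum, substitute $\omega^\xi$ for the right tail using \eqref{!}, and then apply the classical absorption law $\beta + \gamma = \gamma$ whenever $\beta$ is strictly less than the leading $\omega$-power of $\gamma$: the blocks of indices strictly between consecutive $i_j$'s yield ordinals $< \omega^\xi$ (since $\omega^\xi$ is additively indecomposable) which vanish against either the next $\alpha_{i_{j+1}}$ or the final $\omega^\xi$. The second equality of \eqref{truncsum} follows the same way from $\alpha_{i_j} = \alpha_{i_j}^{\restriction \xi} + \tau_j$. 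For the \emph{moreover} clause, I would check that when $\varepsilon < \xi$ the sequence $(\alpha_i^{\restriction \varepsilon})_{i<\omega}$ has the same invariant $\xi$, the same distinguished indices $i_0, \ldots, i_h$, and satisfies $(\alpha_{i_j}^{\restriction \varepsilon})^{\restriction \xi} = \alpha_{i_j}^{\restriction \xi}$; applying \eqref{trunc} and \eqref{truncsum} to the truncated sequence then recovers $\nsum_i \alpha_i$ and $\sum_i \alpha_i$, respectively.

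The content really lives in equation \eqref{!}, inherited from Theorem \ref{propmain}; the main obstacle is just the bookkeeping of the absorption, in particular keeping track of when a step uses Proposition \ref{facts}(5) (for a $\+$ followed by a $+$) versus ordinary indecomposability of $\omega^\xi$ (for plain $+$), and recognising when a trailing $+ \omega^\xi$ may be promoted to $\+ \omega^\xi$.
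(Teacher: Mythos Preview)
Your proposal is correct and follows essentially the same route as the paper: both reduce to equation \eqref{!} from the proof of Theorem \ref{propmain}, absorb the ``small'' summands via Proposition \ref{facts}(5) (for \eqref{trunc}) or the additive indecomposability of $\omega^\xi$ (for \eqref{truncsum}), and handle the \emph{moreover} clause by observing that the truncated sequence has the same invariant $\xi$ and applying the first part twice. The only cosmetic difference is that for the second equality in \eqref{trunc} the paper invokes the identities $\alpha + \omega^\xi = \alpha^{\restriction \xi} + \omega^\xi$ and $(\alpha \+ \beta)^{\restriction \xi} = \alpha^{\restriction \xi} \+ \beta^{\restriction \xi}$ directly, whereas you decompose each $\alpha_{i_j}$ as truncation $\+$ tail and re-absorb---equivalent bookkeeping.
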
 

\begin{proof}
The $\xi$ defined in the statement of the present corollary 
is the same as the $\xi$ defined in the proof 
of Theorem \ref{propmain};
and  the $\alpha _{i_h}$ defined here
is the same as $a _{m-1} $ in that proof
(if the sequence of the
$\alpha _{i_\ell}$'s is not empty).
Equation \eqref{!} in the proof of   Theorem \ref{propmain} 
gives 
$\nsum _{m\leq i < \omega } \alpha _i 
=
 \omega ^ \xi $.
By commutativity and associativity of $\+$,
and using 
Proposition \ref{facts}(5),
 equation \eqref{eq} 
in Theorem  \ref{propmain}
becomes exactly the first identity in
equation \eqref{trunc}. 
The second identity is easy ordinal 
arithmetic, noticing that
$\alpha+ \omega ^ \xi = \alpha ^{\restriction  \xi}
+ \omega ^ \xi$ and
$( \alpha \+ \beta )^{\restriction  \xi} =
\alpha^{\restriction  \xi} \+ \beta ^{\restriction  \xi}$,
for every $\alpha$ and $\beta$.  

The proof of  \eqref{truncsum} is similar, using the
fact that
$\sum _{ i < \omega } \alpha _i
 =
   \alpha _{0} + \dots + \alpha _{m-1} + 
\sum _{ m \leq i < \omega } \alpha _i
$.
Then one should use the identity
$ \beta + \gamma + \omega ^ \xi =
 \gamma  + \omega ^ \xi$,
holding whenever 
$ \beta   < \omega ^ \xi $.
Indeed, if $\gamma < \omega ^ \xi$,
then all sides are equal
to $\omega ^ \xi$; otherwise, if $\gamma \geq \omega ^ \xi$,
then $\beta$ is absorbed by $\gamma$, since it is already absorbed
by the leading term in the Cantor normal expression of $\gamma$.
See \cite{Sie}.

To prove the last two identities, 
notice that if
$\varepsilon < \xi$, then 
$\xi$ is also the least 
 ordinal
such that 
$\{ i < \omega \mid \alpha _i^{\restriction  \varepsilon } \geq \omega ^\xi\}$
is finite.
Hence we can apply 
 \eqref{trunc} twice to get
$\nsum _{ i < \omega } \alpha _i^{\restriction  \varepsilon }
=
( \alpha _{i_0}^{\restriction  \varepsilon })^{\restriction  \xi}
 \+ \dots \+
( \alpha _{i_h}^{\restriction  \varepsilon })^{\restriction  \xi}
\+ \omega ^\xi=
 \alpha _{i_0}^{\restriction  \xi} \+ \dots \+ \alpha _{i_h}^{\restriction  \xi}
\+ \omega ^\xi
=
 \nsum _{ i < \omega } \alpha _i$. 
The last identity is proved in the same way,
using equation \eqref{truncsum}.
\end{proof}

Notice that Corollary \ref{corexpl} 
furnishes a method to 
compute $\nsum _{i < \omega } \alpha _i$
and $\sum _{i < \omega } \alpha _i$
in terms of the Cantor normal forms of
the $\alpha_i$'s,
in fact, of just finitely many 
$\alpha_i$'s, once $\xi$ has been determined. 

One cannot expect 
that, for every sequence $( \alpha_i) _{i \in \omega } $
of ordinals, there is some permutation of $ \omega$
such that 
$\nsum _{ i < \omega } \alpha _i 
=
 \sum _{ i < \omega } \alpha _{\pi(i)}
$.
The counterexample has little to do with infinity:
just take two ordinals $\alpha_0$ and 
$\alpha_1$
such that 
$\alpha_0  \+ \alpha_1 \not= \alpha_0  + \alpha_1 $
and 
$\alpha_0  \+ \alpha_1 \not= \alpha_1  + \alpha_0 $,
for example, 
$\alpha_0= \alpha_1= \omega +1$.
Then, setting 
$\alpha_i = 0$, for $i>1$,
we have
$\alpha_0  \+ \alpha_1  = \nsum _{ i < \omega } \alpha _i 
\not=
 \sum _{ i < \omega } \alpha _{\pi(i)}$, for every permutation $\pi$.
Of course, we can arrange things in order to have some really infinite sum, e.~g.,
take $\alpha_0= \alpha_1= \omega^2 + \omega $ and $\alpha_i=1$, for $i \geq 2$.  

\begin{lemma} \labbel{lemult}
Suppose that $\gamma$ is a mixed sum
of $\alpha_1$  and $\alpha_2$ with $\alpha_1 < \alpha _2= \omega ^\xi$,
for some $\xi >0$, and the mixed sum is realized by
$A_1$, $A_2$ in such a way that $A_2$
is cofinal in $\gamma$. Then $\gamma=\omega ^\xi$.
 \end{lemma}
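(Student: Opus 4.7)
The plan is to show that $\gamma$ is squeezed between $\omega^\xi$ and $\omega^\xi\cdot 2$, and then to use the cofinality of $A_2$ together with the additive indecomposability of $\omega^\xi$ to rule out $\gamma>\omega^\xi$.

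First I would pin down these two bounds. The lower bound $\gamma\geq \omega^\xi$ is immediate, since $A_2\subseteq\gamma$ already has order type $\omega^\xi$. For the upper bound, Carruth's Theorem \ref{carr} gives $\gamma\leq \alpha_1\+\omega^\xi$, and then Proposition \ref{facts}(4), applied with $\alpha=\omega^\xi$, $\beta=\alpha_1<\omega^\xi$ and $\eta=\xi$, yields $\omega^\xi\+\alpha_1<\omega^\xi+\omega^\xi=\omega^\xi\cdot 2$. Hence $\gamma=\omega^\xi+\delta$ for some $\delta$ with $0\leq\delta<\omega^\xi$, and I would argue by contradiction, assuming $\delta>0$.

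The second step is to split $A_2$ at the threshold $\omega^\xi$: let $\nu$ be the order type of $A_2\cap \omega^\xi$ and $\mu$ the order type of $A_2\cap [\omega^\xi,\gamma)$, so that $\nu+\mu=\omega^\xi$. Since $A_2\cap [\omega^\xi,\gamma)$ embeds into $[\omega^\xi,\gamma)$, whose order type is $\delta<\omega^\xi$, we have $\mu\leq\delta<\omega^\xi$. The additive indecomposability of $\omega^\xi$ (which is where the hypothesis $\xi>0$ is used) then forces $\nu=\omega^\xi$, and from $\omega^\xi+\mu=\omega^\xi$ one concludes $\mu=0$.

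Finally, the cofinality of $A_2$ in $\gamma$ delivers the contradiction: since $\delta\geq 1$, the interval $[\omega^\xi,\gamma)$ is nonempty, and cofinality forces $A_2$ to meet it, giving $\mu\geq 1$, against $\mu=0$. Therefore $\delta=0$ and $\gamma=\omega^\xi$. I do not expect any real obstacle here; the argument is a short interplay between Carruth's bound and the absorption property of powers of $\omega$, and the only point that deserves a line of care is the observation that it is precisely the hypothesis $\xi>0$ that makes $\omega^\xi$ additively indecomposable.
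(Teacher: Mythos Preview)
Your proof is correct, but it follows a different route from the paper's. The paper argues more directly: for each $\delta<\gamma$, cofinality of $A_2$ implies that $A_2\cap\delta$ is a proper initial segment of $A_2$, hence has order type $\beta_2<\omega^\xi$; since also $\beta_1\leq\alpha_1<\omega^\xi$, Carruth's Theorem~\ref{carr} together with Proposition~\ref{facts}(3) gives $\delta\leq\beta_1\+\beta_2<\omega^\xi$, whence $\gamma\leq\omega^\xi$. No intermediate bound $\gamma<\omega^\xi\cdot 2$ and no splitting of $A_2$ at the threshold $\omega^\xi$ are needed. Your approach trades this uniform estimate for a more structural one: first squeeze $\gamma$ into $[\omega^\xi,\omega^\xi\cdot 2)$ via Carruth and Proposition~\ref{facts}(4), then derive a contradiction from how $A_2$ must split across $\omega^\xi$. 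Both arguments are short; the paper's is a bit more streamlined because it never needs to write $\gamma=\omega^\xi+\delta$ or invoke left-cancellation. As a minor side remark, your comment that it is ``precisely the hypothesis $\xi>0$'' that makes $\omega^\xi$ additively indecomposable is not quite accurate---$\omega^0=1$ is indecomposable as well---but this has no bearing on the validity of the proof.
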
 

\begin{proof}
$\gamma \geq \alpha _2= \omega ^\xi$ is trivial.
For the other direction, 
let $\delta < \gamma $.
Thus $\delta$ is a mixed sum of $\beta_1$,
$\beta_2$, where    
$\beta_1$,
$\beta_2$ are the order types of, respectively,
$A_1 \cap  \delta $, $A_2 \cap \delta $.
Since  $\delta< \gamma $, $A_2$ is cofinal in $\gamma$ and
$A_2$ has   order type $ \omega^\xi$,
then  $A_2 \cap \delta $ has order type 
$ < \omega^\xi$.
Moreover 
$A_1 \cap \delta $ has order type 
$ \leq \alpha _1< \omega^\xi$.
Since $\delta$ is a mixed sum of 
$\beta_1$ and $\beta_2$,
then, by Theorem \ref{carr},
$ \delta \leq \beta _1 \+ \beta _2 $.
We have showed that 
$\beta_1, \beta _2 < \omega ^ \xi$,
hence  $ \delta \leq \beta_1 \+ \beta _2 < \omega ^ \xi$,
by Proposition \ref{facts}(3). 
Since $\delta < \omega ^\xi$,
for every $\delta< \gamma $,
then $\gamma \leq \omega ^\xi$.   
 \end{proof}

\begin{theorem} \labbel{cor}
If $( \alpha_i) _{i \in I} $ is a sequence of ordinals,
then there are at most a finite number
of left-finite and almost pw-convex
mixed sums of $( \alpha_i) _{i \in I} $.
 \end{theorem}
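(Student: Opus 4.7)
The plan is to show that every left-finite almost pw-convex mixed sum $\gamma$ of $(\alpha_i)_{i\in I}$ is itself a mixed sum of just two ordinals $\sigma$ and $\omega^\xi$, where $\omega^\xi$ is the tail supplied by Corollary \ref{corexpl} and $\sigma$ arises from a Carruth-style mixed sum of only finitely many $\alpha_i$'s; both $\sigma$ and, for each $\sigma$, the possible $\gamma$ will take only finitely many values. Let $\xi$ be as in Corollary \ref{corexpl}. If $\xi=0$ then all but finitely many $\alpha_i$ vanish and the assertion reduces to Theorem \ref{carr}, so assume $\xi>0$ and let $L:=\{l\in I:\alpha_l\geq\omega^\xi\}$, a finite set by the definition of $\xi$.

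For a given realization $(A_i)_{i\in I}$ of $\gamma$ with finite exception set $F$, I would set $H=F\cup L$, $B=\bigcup_{h\in H}A_h$, and $C=\bigcup_{i\notin H}A_i$. Since $F\subseteq H$, every $A_i$ with $i\notin H$ is convex and satisfies $\alpha_i<\omega^\xi$. The inherited order on $C$ then presents $C$ as a pw-convex left-finite mixed sum of the subsequence $(\alpha_i)_{i\notin H}$, so by Proposition \ref{sum} its order type is an ordinary sum of these $\alpha_i$'s; by Corollary \ref{corexpl} this sum equals exactly $\omega^\xi$. Hence $\gamma$ is a mixed sum of the two ordinals $\sigma$ (the order type of $B$) and $\omega^\xi$ (the order type of $C$), with $C$ cofinal in $\gamma$ by left-finiteness.

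If $\sigma<\omega^\xi$, Lemma \ref{lemult} forces $\gamma=\omega^\xi$. Otherwise, a direct cofinality argument combining Theorem \ref{carr} with Proposition \ref{facts}(3) shows that the mixed sums of the two fixed ordinals $\sigma$ and $\omega^\xi$ with $\omega^\xi$ cofinal form a finite set, so only finitely many $\gamma$ arise per $\sigma$. Since $\sigma$ is itself a mixed sum of the finite sequence $(\alpha_h)_{h\in H}$, it takes only finitely many values per fixed $H$. The main obstacle is that $H=F\cup L$ depends on $F$, which can in principle include small non-convex indices (with $2\leq\alpha_j<\omega^\xi$), so $H$ may a priori vary over infinitely many finite supersets of $L$. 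To overcome this, I would show that every realization can be modified so that $F\subseteq L$: any small non-convex $A_j$ can be straightened into a convex interval of the same order type $\alpha_j$ by exchanging its elements with adjacent singletons (or other small convex $A_i$'s), which is feasible because the cofinal tail of $\gamma$ supplies abundant convex room of total type $\omega^\xi$. With $F\subseteq L$, $H=L$ is fixed, $\sigma$ takes finitely many values, and the conclusion follows.
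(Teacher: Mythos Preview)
Your overall strategy---reduce every left-finite almost pw-convex realization of $\gamma$ to a mixed sum of a \emph{fixed} finite list of ordinals, then invoke the fact that a finite list has only finitely many mixed sums---is exactly the paper's, and your steps setting up $H=F\cup L$, $B$, $C$ and showing that $C$ has order type $\omega^\xi$ are essentially correct and parallel the paper's argument. The difficulty, as you correctly isolate, is that $H$ depends on the realization through $F$. Your proposed remedy (``straighten'' each small non-convex $A_j$ by swapping elements with neighboring convex pieces so that eventually $F\subseteq L$) is where the real gap lies: as written it is not a proof, and it is not evident that such swaps can always be performed without disturbing the order types of the large $A_l$'s (those with $l\in L$) that a given $A_j$ may interleave with.

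The paper avoids this problem with a one-line move you already have the tools for. Instead of trying to make the small non-convex $A_j$'s (those with $j\in F\setminus L$) convex, it \emph{absorbs} them into the tail: set $A'=C\cup\bigcup_{j\in F\setminus L}A_j$. Since each such $\alpha_j<\omega^\xi$ and $C$ is cofinal in $\gamma$ of order type $\omega^\xi$, repeated applications of Lemma~\ref{lemult} give that $A'$ still has order type $\omega^\xi$. Now $\gamma$ is realized as a mixed sum of the \emph{fixed} finite list $(\alpha_l)_{l\in L}$ together with the single ordinal $\omega^\xi$, independently of the realization, and L\"auchli's theorem finishes at once. Two smaller points: your base case $\xi=0$, and your finiteness claims for the values of $\sigma$ and for the two-term mixed sums of $\sigma$ and $\omega^\xi$, all rest on L\"auchli's theorem, not on Theorem~\ref{carr} (Carruth's result only identifies the maximum mixed sum; it does not by itself give finiteness).
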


\begin{proof} 
If there is some
left-finite 
mixed sum of $( \alpha_i) _{i \in I} $,
then necessarily all but countably many $\alpha_i$'s
are $0$.
Hence it is no loss of generality to assume that 
$|I| \leq \omega $.
If all but a finite number of the $\alpha_i$'s are $0$,
then the corollary follows from a theorem 
by L{\"a}uchli \cite{lauchli},
asserting that a finite set of ordinals
has only a finite number of mixed sums.
In conclusion, we can assume that  
$I= \omega $ and
$\alpha_i\not=0$, for every $ i < \omega$. 

Suppose that 
$\gamma$ is a 
left-finite and almost pw-convex
mixed sum of $( \alpha_i) _{i < \omega } $,
as realized by $( A_i) _{i < \omega } $.
Assume the notation in Corollary \ref{corexpl}. 
We shall show that 
$\gamma$ is a mixed sum of 
$ \alpha _{i_0}, \dots,  \alpha _{i_h}, \omega ^\xi$. 
This will give the result
by the mentioned theorem from L{\"a}uchli \cite{lauchli}.

Since 
$\gamma$ is realized as an 
almost pw-convex
mixed sum,
there is $n$ such that 
$A_i$ is convex, for every $i \geq n$.
Without loss of generality,
choose $n>i_h$, or, which is the same, 
$n \geq m$, where $m$ is given by
Theorem \ref{propmain}.
Let 
$A= \bigcup _{i \geq n} A_i $,
thus $\gamma$ is realized as a finite mixed sum
by $A_0, \dots, A _{n-1}, A $.
Moreover, by the left finiteness of the realization $( A_i) _{i \in I} $,
and since we have assumed that the $A_i$'s are nonempty, 
we get that $A$ is cofinal in $\gamma$. 
The order type of $A$ 
is $\sum _{n \leq i < \omega } \alpha _{ \pi(i)}   $,
for some permutation of $\pi$ of $[n, \omega )$,
by Proposition \ref{sum},
since $A= \bigcup _{i \geq n} A_i $,
 the $A_i$'s are convex
(in $\gamma$, hence, a fortiori, in $A$), for $i \geq n$,
and the realization 
$( A_i) _{i < \omega } $
is left-finite.
By Remark \ref{bastinf} and equation \eqref{!} in the proof of 
Theorem \ref{propmain},
we get that $A$ has order type $ \omega^ \xi$.    
Now let 
$F= \{ 0, \dots, n-1\} \setminus \{ i_0, \dots, i_h\}  $, and set
$A' = A  \cup \bigcup _{j \in F} A_j$.
By repeated applications of Lemma \ref{lemult},
we get that $A'$ has order type $ \omega^ \xi$.
Since $\gamma= A' \cup A _{ i_0} \cup \dots \cup A _{ i_h} $,
then $\gamma$ is a mixed sum of 
$\omega ^\xi, \alpha _{i_0}, \dots,  \alpha _{i_h}$,
what we wanted to prove. 
\end{proof}  

In view of
Proposition \ref{sum}, 
Theorem \ref{cor} extends a classical result
by  Sierpinski \cite{Sie}, 
asserting that
$\sum _{i < \omega } \alpha _{\pi(i)}$
assumes only finitely many values,
$\pi$ varying among all permutations of $ \omega$.

By Remark \ref{lf}, the assumption of 
almost pw-convexity  is necessary in 
Theorem \ref{cor}. 

\begin{remark} \labbel{shift}
A \emph{shifted sum}
is defined in the same way as a mixed sum, except
that we do not require 
the $A_i$'s to be pairwise disjoint.
See \cite{OC} for applications  of finite shifted sums.  
Since every mixed sum  is a shifted sum and,
on the other hand,
given a shifted sum  of $( \alpha_i) _{i \in I} $,
there is always some larger mixed sum of 
$( \alpha_i) _{i \in I} $, we get that
Theorem \ref{carr} 
holds for shifted sums in place of mixed sums.
Then the proof of Theorem \ref{main}, too,
carries over to get the result for shifted sums in place of mixed sums. 
On the other hand, the analogue of Theorem \ref{cor}
does not hold for shifted sums. Indeed, 
let $\alpha_i = \omega $, for $i <  \omega$. 
Then $ \omega$ can be realized as
a left-finite pw-convex shifted sum of $( \alpha_i) _{i < \omega } $
by $ A_i= [i, \omega )$.
Then we can get infinitely many    
left-finite pw-convex shifted sums of    $( \alpha_i) _{i < \omega } $
arguing as in  Remark \ref{lf}.
\end{remark}

\begin{remark} \labbel{surreal}
It is clear that, when restricted to the class of ordinals,
the surreal number addition is the natural sum.
See Alling \cite{All}, Ehrlich \cite{E} and
Gonshor \cite{Go}  for information about the surreal numbers.
Corollary \ref{corexpl}
suggests the possibility of extending the  infinitary natural sum
 to the class of those surreal numbers
which have positive coefficients 
in their Conway normal representation.
Recall that every surreal number $x$ can be uniquely
expressed in \emph{Conway normal form} as
$x= \sum _{s \in S} \omega ^s r_s $,
where $S$ is a reverse-well-ordered 
set of surreal numbers and the
$r_s$ are nonzero real numbers.
In case $x$ is an ordinal
 the Conway and the Cantor normal forms
coincide. 
 
Let 
$( x_i) _{i < \omega } $ 
be a countable sequence of surreal numbers
with normal forms
$x_i= \sum _{s \in S_i} \omega ^s r _{s,i}  $
and such that all the $r _{s,i}$'s are positive
(we shall show later that this request can be somewhat weakened).
Let 
$S= \bigcup _{i < \omega } S_i$
and define a subset $S^*$
of $S$ by declaring
$s \in S^* $ if and only if 
$\{ t \in S \mid t \geq s \} $ is reverse-well-ordered.
It might well happen that $S^* = \emptyset $; however,
in any case, $S^*$ is  reverse-well-ordered.
For
$s\in S$,
let 
$c_s = \sum _i  r _{s,i}$,
where the sum is taken among all 
$i< \omega $ such that  $s \in S_i$.
This might be either a finite sum or a countably infinite sum of 
positive real numbers;
in the latter case we consider it as an infinite sum in
the sense of classical analysis.
We allow the possibility $c_s= \infty $, that is, $ c_s= \omega$ 
in the surreal sense.

Suppose first that
$S=S^*$.
In this case we define
\begin{equation}\labbel{sugs}     
\nsum _{i < \omega } x_i =  \sum _{s \in S} \omega ^s c_s
 \end{equation} 
This is a well-defined surreal number,
since $S=S^*$, hence $S$ is reverse-well-ordered.
Strictly speaking, equation \eqref{sugs}
does not necessarily give a 
normal form representation, due to the possibility that 
some $c_s$ is $ \omega$.
Formally,
$\nsum _{i < \omega } x_i =  \sum _{s \in T} \omega ^s c_s + \sum _{s \in U} \omega ^{s+1}$, where 
$U = \{ s \in S \mid c_s = \omega \} $ and
$T= S \setminus U$.

Notice also that, in order to give the definition in
equation \eqref{sugs} we only need that
$S=S^*$ and that the sums 
$\sum _{i}  r _{s,i} $ 
are well-defined in the sense of classical analysis,
no matter whether all the $r _{s,i} $ are positive.

Now suppose that
$S\not=S^*$ and let
$\bar{s} $ be the surreal number
$ \{ S \setminus S^* \mid \emptyset  \} $.
In this case we define
\begin{equation}\labbel{snugs}     
\nsum _{i < \omega } x_i = \omega ^{\bar{s}}+  \sum _{s \in S^*} \omega ^s c_s
 \end{equation} 
The definition in equation \eqref{snugs} 
makes sense just under the assumption that
$c_s =\sum _{i}  r _{s,i} $ 
is well-defined,
for every $s \in S^*$.
However, it is natural to ask that
there is  $s \in S \setminus S^*$ such that 
$\sum _{i}  r _{t,i} $ 
is well-defined \emph{and strictly positive},
for every  $t \in S \setminus S^*$ with
$t \geq s$. In case that there is 
$s \in S \setminus S^*$ such that 
$\sum _{i}  r _{t,i} $ 
is well-defined and strictly negative,
for every  $t \in S \setminus S^*$ with
$t \geq s$, a more natural definition would be
$\nsum _{i < \omega } x_i = - \omega ^{\bar{s}}+  \sum _{s \in S^*} \omega ^s c_s$.

In the special case when each $x_i$ is an ordinal, 
the definitions given by
\eqref{sugs} and \eqref{snugs} 
coincide with Definition \ref{natsumw}, by
Corollary \ref{corexpl}.

The definition given in the present remark is quite tentative, 
and is not the only possible one.
Notice that the definition of
$\nsum _{i < \omega } x_i$ given here 
for surreal numbers does not satisfy the analogue of 
Proposition \ref{factsw}(2).
Indeed, let 
$x_i=  \omega^i$   
and 
$y_i=  \omega ^{ \omega -2} $,
for 
$i< \omega$.
Then 
$x_i < y_i$,
for every $i< \omega$; 
 however,
$ \nsum _{i < \omega } x_i =
\omega ^ \omega >
\omega ^{ \omega -1}
=
 \omega ^{ \omega -2} \omega =
 \nsum _{i < \omega } y_i
 $. 
 \end{remark}

\section*{Disclaimer} \labbel{disc} 
Though the author has done his best efforts to compile the following
list of references in the most accurate way,
 he acknowledges that the list might 
turn out to be incomplete
or partially inaccurate, possibly for reasons not depending on him.
It is not intended that each work in the list
has given equally significant contributions to the discipline.
Henceforth the author disagrees with the use of the list
(even in aggregate forms in combination with similar lists)
in order to determine rankings or other indicators of, e.~g., journals, individuals or
institutions. In particular, the author 
 considers that it is highly  inappropriate, 
and strongly discourages, the use 
(even in partial, preliminary or auxiliary forms)
of indicators extracted from the list in decisions about individuals (especially, job opportunities, career progressions etc.), attributions of funds, and selections or evaluations of research projects.

\end{document}